\numberwithin{equation}{section}
\numberwithin{figure}{section}
\theoremstyle{plain}
\newtheorem{thm}{\protect\theoremname}[section]
  \theoremstyle{plain}
  \theoremstyle{plain}
  \theoremstyle{remark}
  \theoremstyle{plain}
\newtheorem{theorem}{Theorem}[section]
\newtheorem{corollary}[theorem]{Corollary}
\newtheorem{example}[theorem]{Example}
\newtheorem{lemma}[theorem]{Lemma}
\newtheorem{proposition}[theorem]{Proposition}
\newtheorem{remark}[theorem]{Remark}
\providecommand{\lemmaname}{Lemma}
  \providecommand{\propositionname}{Proposition}
  \providecommand{\remarkname}{Remark}
\providecommand{\theoremname}{Theorem}
\providecommand{\lemmaname}{Lemma}
  \providecommand{\propositionname}{Proposition}
  \providecommand{\remarkname}{Remark}
\providecommand{\theoremname}{Theorem}
\providecommand{\corollaryname}{Corollary}
  \providecommand{\lemmaname}{Lemma}
  \providecommand{\propositionname}{Proposition}
  \providecommand{\remarkname}{Remark}
\providecommand{\theoremname}{Theorem}
  \providecommand{\corollaryname}{Corollary}
  \providecommand{\lemmaname}{Lemma}
  \providecommand{\propositionname}{Proposition}
  \providecommand{\remarkname}{Remark}
\providecommand{\theoremname}{Theorem}
\begin{document}

\title[Explicit models of $\ell_1$-preduals and the $w^*$-FPP in $\ell_1$]{Explicit models of $\ell_1$-preduals and \\ the weak$^*$ fixed point property in $\ell_1$}

\dedicatory{Dedicated to the Memory of Professor Kazimierz Goebel}

\author{Emanuele Casini}

\address{Dipartimento di Scienza e Alta Tecnologia, Università dell'Insubria,
via Valleggio 11, 22100 Como, Italy }

\email{emanuele.casini@uninsubria.it}

\author{Enrico Miglierina}

\address{Dipartimento di Matematica per le Scienze Economiche, Finanziarie ed Attuariali,
Università Cattolica del Sacro Cuore, Via Necchi 9, 20123 Milano,
Italy }

\email{enrico.miglierina@unicatt.it}

\author{\L ukasz Piasecki}

\address{Instytut Matematyki,
Uniwersytet Marii Curie-Sk{\l}odowskiej, Pl. Marii Curie-Sk{\l}odowskiej 1, 20-031 Lublin, Poland}

\email{piasecki@hektor.umcs.lublin.pl}

\begin{abstract}
We provide a concrete isometric description of all the preduals of $\ell_1$ for which the standard basis in $\ell_1$ has a finite number of $w^*$-limit points. Then, we apply this result to give an example of an $\ell_1$-predual $X$ such that its dual $X^*$ lacks the weak$^*$ fixed point property for nonexpansive mappings (briefly, $w^*$-FPP), but $X$ does not contain an isometric copy of any  hyperplane $W_{\alpha}$ of the space $c$ of convergent sequences such that $W_\alpha$ is a predual of $\ell_1$ and $W_\alpha^*$ lacks the $w^*$-FPP. This answers a question left open in the 2017 paper of the present authors.
\end{abstract}

\subjclass[2020]{Primary 46B25, 47H10}

\keywords{nonexpansive mappings, $w^*$-fixed point property, Lindenstrauss spaces}

\maketitle

\section{Introduction, notations and preliminaries}

The study of Lindenstrauss spaces, i.e. the Banach spaces $X$ whose duals $X^*$ are isometric to an $L_1(\mu)$ space for some measure $\mu$, received a lot of attention between 60's and 70's of the last century. In that period, deep results on classification and structural properties were achieved (see, e.g., \cite{Lindenstrauss 1964}, \cite{Lazar-Lindenstrauss1971}, \cite{Lindenstrauss-Wulbert} and the monograph \cite{Lacey book}). Among these results we recall the paper \cite{Zippin1969} where Zippin proved that every infinite-dimensional Lindenstrauss space contains an isometric copy of the space $c_0$ of sequences converging to zero, and the work \cite{Semadeni1964} where Semadeni proved that if the closed unit ball of a Lindenstrauss space has at least one extreme point, then it is isometric to the space $A(S)$ of affine continuous functions on a Choquet simplex $S$. It is worth to mention that Lindenstrauss spaces found a natural application in the problem of the extension of compact operators (see \cite{Lindenstrauss 1964}, \cite{Lazar1969}) and they are strictly connected to various geometrical properties of Banach spaces such as polyhedrality (a Banach space is called polyhedral whenever all finite-dimensional sections of its closed unit ball are polytopes - see, e.g., \cite{Durier & Papini}, \cite{Fonf Lindenstrauss & Phelps 2001}, \cite{Fonf & Vesely} and the reference therein). However, recent studies have shown that the knowledge of the structure of Lindestrauss spaces does not appear to be complete. Indeed, the authors of the present paper and L. Vesel\'y proved in \cite{Casini-Miglierina-Piasecki-Vesely2016} the existence of a ``strange'' Lindenstrauss space that does not fit well in the framework of the previous classification (see also \cite{Zippin 2018}). The idea behind the construction of the example of such a space is based on the previous papers \cite{Casini-Miglierina-Piasecki2014} and \cite{Casini-Miglierina-Piasecki2015} by the present authors. In \cite{Casini-Miglierina-Piasecki2014} we gave a complete classification of closed hyperplanes in the space $c$ of convergent sequences, which is a typical example of a Lindenstrauss space, since it is well known that $c^*=\ell_1$. 
The relevance of the role played by the closed hyperplanes of $c$ is underlined also by Theorem 1 in \cite{Zippin 2018} that shows how these hyperplanes are ubiquitous in the separable Lindenstrauss spaces.   The main feature of interest of the results in \cite{Casini-Miglierina-Piasecki2014}, beside the example mentioned above, is that they provide a complete isometric description of preduals of $\ell_1$ when it is assumed that the standard basis of $\ell_1$ is $w^*$-convergent. More precisely, if the standard basis $\{e^*_n\}$ of $\ell_1$ is $\sigma(\ell_1,X)$-convergent to $\alpha$ ($e^*_n \xrightarrow {\sigma(\ell_1,X)} \alpha$), then there exists a unique (up to isometry) hyperplane $W_{\alpha}$ in $c$ such that $X=W_{\alpha}$. This classification result helped us to conclude a relationship between a certain property describing the inner structure of a separable Lindenstrauss space and a fixed point property for its dual. It is worth mentioning that the metric fixed point theory is currently one of the basic branches of nonlinear analysis, finding numerous applications e.g. in the theory of differential and integral equations, in the theory of holomorphic mappings, in the theory of fractals and in graph theory. We refer a reader not acquainted with the metric fixed point theory to the books by Goebel and Kirk \cite{GoKi} and Goebel \cite{Go1}. 

We recall that a nonempty, bounded, closed and convex subset $C$ of $X$ has the fixed point property
(shortly, FPP) if each nonexpansive mapping (i.e., the mapping $T:C\rightarrow C$ such that $\|T(x)-T(y)\|\leq \|x-y\|$ for all $x,y\in C$) has a fixed point. If every such set $C$ has the FPP, then we say that $X$ has the FPP.
The dual space $X^*$ is said to have the $\sigma(X^*,X)$-fixed
point property (shortly, $\sigma(X^*,X)$-FPP) if every nonempty, convex,
$\sigma(X^*,X)$-compact subset $C$ of $X^*$ has the FPP. The study of the
$\sigma(X^*,X)$-FPP reveals to be of special interest whenever a
dual space has different preduals. Indeed, the behaviour with
respect to the $\sigma(X^*,X)$-FPP of a given dual space can be
completely different if we consider two different preduals. For
instance, this situation occurs when we consider the space $\ell_1$
and its preduals $c_0$ and $c$ where it is well-known (see
\cite{Karlovitz1976}) that $\ell_1$ has the $\sigma(\ell_1,c_0)$-FPP
whereas it lacks the $\sigma(\ell_1,c)$-FPP.
As we said above, the result proved in \cite{Casini-Miglierina-Piasecki2014} helped us to deduce a necessary and sufficient condition for the $\sigma(\ell_1,X)$-FPP in terms of the inner structure of an $\ell_1$-predual $X$. Since this result plays a central role in our paper, we need to introduce some notations to make readable its statement. Moreover, these notations will be used throughout the whole paper.
We recall that the standard duality between $c$ and $\ell_1$ is defined by
$$
x^*(x)=x^{*}(1)\lim_i x(i) + \sum_{i=1}^{\infty}x^{*}(i+1)x(i)
$$
for every $x=(x(1),x(2),\dots)\in c$ and every $x^*=(x^*(1),x^*(2), \dots)\in \ell_1$. 

For every $\alpha=(\alpha(1),\alpha(2),\dots)\in \ell_1$ with $\|\alpha\|\leq 1$, we define the following hyperplane in $c$:
$$
W_\alpha=\left\lbrace x=(x(1),x(2),\dots)\in c: \lim_{i \rightarrow \infty}x(i)=\sum_{i=1}^{\infty}\alpha(i)x(i) \right\rbrace.
$$
For a detailed study of this class of spaces we refer the reader to \cite{Casini-Miglierina-Piasecki2014} and \cite{Casini-Miglierina-Piasecki2015}. In \cite{Casini-Miglierina-Piasecki2014} we proved that $W^*_\alpha=\ell_1$ and $e^*_n \xrightarrow {\sigma(\ell_1,W_\alpha)} \alpha$. Moreover, since $\ell_1$ lacks the $\sigma(\ell_1,W_\alpha)$-FPP if and only if $\left\|\alpha \right\|=1$ and $\alpha$ has infinitely many non-negative components, we call ``bad''  any hyperplane $W_\alpha$ with $\alpha$ satysfying these properties.

Now, we can recall the necessary and sufficient condition for the $\sigma(\ell_1,X)$-FPP mentioned above. This theorem seems to be very interesting since it is very hard to find a complete characterization of fixed point property for a class of Banach spaces. Indeed, to our best knowledge, in the literature the only result in this vein is the theorem asserting that a subspace of $L_1([0,1])$ has the FPP if and only if it is reflexive (see \cite{Maurey1980-1981}, \cite{Dowling-Lennard1997}).

\begin{theorem}\label{OGT} [Theorem 4.1 in \cite{Casini-Miglierina-Piasecki2015}]
	Let $X$ be a predual of $\ell_1$. Then the following are equivalent.
\begin{enumerate}
	\item \label{item noFPP} $\ell_1$ lacks the $\sigma(\ell_1,X)$-FPP.
	
	\item \label{item subsequence}There is a subsequence $\{e_{n_k}^*\}_{k \in
		\mathbb{N}}$ of the standard basis $\{e_{n}^*\}_{n \in \mathbb{N}}$ in
	$\ell_1$ which is $\sigma(\ell_1,X)$-convergent to a norm-one
	element $e^* \in \ell _1$ with $e^*(n_k)\geq 0$ for all $k\in
	\mathbb{N}$.
	
	\item\label{item quotient isometric} There is a quotient of $X$ isometric to a ``bad'' $W_\alpha$.
	
	\item \label{item quotient contained} There is a quotient of $X$ that contains a subspace isometric to a ``bad'' $W_\beta$.
	\end{enumerate}
\end{theorem}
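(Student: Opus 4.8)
The plan is to establish the cycle of implications $(1)\Rightarrow(2)\Rightarrow(3)\Rightarrow(4)\Rightarrow(1)$. Throughout I would identify $X^{*}=\ell_{1}$ with a space sitting \emph{above} $X$ via the canonical isometric embedding $X\hookrightarrow X^{**}=\ell_{\infty}$, so that every $x\in X$ is the bounded sequence $\bigl(\langle e_{n}^{*},x\rangle\bigr)_{n}$ and $\|x\|=\sup_{n}|\langle e_{n}^{*},x\rangle|$. The implication $(3)\Rightarrow(4)$ is trivial, since a ``bad'' $W_{\alpha}$ is a closed subspace of itself; the content lies in the other three steps, and I expect $(1)\Rightarrow(2)$ to be the main obstacle.

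For $(2)\Rightarrow(3)$ I would use the classification theorem of \cite{Casini-Miglierina-Piasecki2014} on a suitable quotient. Let $\{e_{n_{k}}^{*}\}$ be the subsequence with $e_{n_{k}}^{*}\xrightarrow{\sigma(\ell_1,X)}e^{*}$, $\|e^{*}\|=1$ and $e^{*}(n_{k})\ge0$. Set $r:=e^{*}-\sum_{k}e^{*}(n_{k})e_{n_{k}}^{*}$, a vector of $\ell_1$ disjointly supported from $\{e_{n_k}^*\}$ with $\|r\|=1-\sum_k e^{*}(n_{k})\ge 0$, and let $Z$ be the weak$^*$-closed linear span of $\{e_{n_{k}}^{*}\}$ in $\ell_{1}$. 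Because the only $\sigma(\ell_1,X)$-cluster point of $\{e_{n_{k}}^{*}\}$ is $e^{*}$, a short argument via Milman's theorem (the extreme points of a weak$^*$-compact convex set lie in the weak$^*$-closure of a generating set) identifies $Z$, isometrically, with $\ell_{1}$ having unit vector basis $\{e_{n_{k}}^{*}\}$ together with the (at most one) extra vector $r/\|r\|$, and this basis is $\sigma\bigl(Z,X/Z_{\perp}\bigr)$-convergent to $\alpha:=(\|r\|,e^{*}(n_{1}),e^{*}(n_{2}),\dots)$, which has $\|\alpha\|_{\ell_1}=1$ and all coordinates non-negative, i.e.\ $\alpha$ is ``bad''. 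As $Z$ is weak$^*$-closed, $Z=(X/Z_{\perp})^{*}$ and $X/Z_{\perp}$ is a predual of $\ell_{1}$; the classification theorem then yields $X/Z_{\perp}\cong W_{\alpha}$, a ``bad'' hyperplane, which is $(3)$. The delicate point is to verify that $Z$ really is isometric to $\ell_{1}$ with the stated basis, that is, that the weak$^*$-closure neither collapses nor overshoots — this is precisely where $\|e^{*}\|=1$ and $e^{*}(n_{k})\ge0$ enter.

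For $(4)\Rightarrow(1)$ I would argue in two stages. First, if $X/V$ is any quotient, then $(X/V)^{*}=V^{\perp}\subseteq\ell_{1}$ carries the topology $\sigma(V^{\perp},X/V)=\sigma(\ell_{1},X)|_{V^{\perp}}$, so any $\sigma((X/V)^{*},X/V)$-compact convex set is already a $\sigma(\ell_{1},X)$-compact convex subset of $\ell_{1}$; hence failure of the $w^{*}$-FPP for $X/V$ forces it for $X$, and it suffices to show that if a ``bad'' $W_{\beta}$ embeds isometrically into an $\ell_{1}$-predual $Y$ then $Y^{*}$ lacks the $\sigma(Y^{*},Y)$-FPP. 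Since $W_{\beta}$ is ``bad'', there are a $\sigma(W_{\beta}^{*},W_{\beta})$-compact convex $K_{0}\subseteq W_{\beta}^{*}$ and a fixed-point-free nonexpansive $T_{0}\colon K_{0}\to K_{0}$. As $W_{\beta}$ is a Lindenstrauss space, Lindenstrauss's finite-dimensional extension property makes its isometric copy locally $1$-complemented, hence an ideal, in $Y$, so there is a norm-one Hahn--Banach extension operator $\Phi\colon W_{\beta}^{*}\to Y^{*}$, i.e.\ a linear isometric section of the restriction map $i^{*}\colon Y^{*}\to W_{\beta}^{*}$. With $R:=\sup_{K_0}\|\cdot\|$ and $\widetilde{K}_{0}:=\bigl\{y^{*}\in Y^{*}:i^{*}(y^{*})\in K_{0},\ \|y^{*}\|\le R\bigr\}$, one checks that $\widetilde{K}_{0}$ is $\sigma(Y^{*},Y)$-compact (it is the intersection of the weak$^*$-closed set $(i^{*})^{-1}(K_{0})$ with a ball) and convex, that $\Phi(K_{0})\subseteq\widetilde{K}_{0}$, and that $T:=\Phi\circ T_{0}\circ i^{*}$ maps $\widetilde{K}_{0}$ into itself; $T$ is nonexpansive ($i^{*}$ has norm one and $\Phi$ is an isometry), and $T(y^{*})=y^{*}$ would force $i^{*}(y^{*})=i^{*}\Phi T_{0}i^{*}(y^{*})=T_{0}(i^{*}(y^{*}))$, a fixed point of $T_{0}$. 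Thus $Y^{*}$, hence $\ell_{1}=X^{*}$, lacks the $w^{*}$-FPP.

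Finally, $(1)\Rightarrow(2)$ is the analytic heart, and the hardest step. Assuming $\ell_{1}=X^{*}$ lacks the $\sigma(\ell_{1},X)$-FPP, take a $\sigma(\ell_{1},X)$-compact convex $C\subseteq\ell_{1}$ and a fixed-point-free nonexpansive $T\colon C\to C$; by Zorn's lemma pass to a minimal nonempty closed convex $T$-invariant $K\subseteq C$, for which minimality and the absence of a fixed point force $\operatorname{diam}K=d>0$, and apply the weak$^*$ version of the Goebel--Karlovitz lemma to obtain a diametral approximate fixed point sequence $\{x_{m}\}\subseteq K$, i.e.\ $\|x_{m}-y\|\to d$ for every $y\in K$. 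Passing to a subsequence, $x_{m}\xrightarrow{\sigma(\ell_1,X)}x\in K$, so $z_{m}:=(x_{m}-x)/d$ satisfies $z_{m}\xrightarrow{\sigma(\ell_1,X)}0$ and $\|z_{m}\|\to1$. The crux is to convert this diametral, weak$^*$-null, asymptotically norm-one sequence into a genuine subsequence of the unit vector basis with the required sign and limit-norm properties: using the lattice structure of $\ell_{1}$ together with Milman's theorem (locating the extreme points of $B_{X^{*}}$ among the weak$^*$-cluster points of $\{\pm e_{n}^{*}\}$), one shows that after a small perturbation and a further subsequence the $z_{m}$ are close to signed unit vectors $\varepsilon_{m}e_{j_{m}}^{*}$, that the signs may be taken constant and $e_{j_{m}}^{*}\xrightarrow{\sigma(\ell_1,X)}e^{*}$ with $\|e^{*}\|=1$, and that diametrality forces $e^{*}(j_{m})\ge0$; granting this, $\{e_{j_{m}}^{*}\}$ is the subsequence required in $(2)$. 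This interplay between the weak$^*$ topology and the $\ell_{1}$-geometry is where the real work — and the main risk of a gap — resides.
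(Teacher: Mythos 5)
The paper does not prove this theorem: it is imported verbatim as Theorem~4.1 of \cite{Casini-Miglierina-Piasecki2015}, so there is no in-paper proof to compare against and your argument has to stand on its own. Three of your four implications do. $(3)\Rightarrow(4)$ is trivial, as you say. $(2)\Rightarrow(3)$ is essentially the right argument, except that ``Milman's theorem'' is not the tool that identifies $Z$: what you need is Alspach's Lemma~1 (the norm-closed span of a set $K$ of disjointly supported norm-one vectors with $\overline{K}^{w^*}\subseteq \overline{\mathrm{span}}\,K$ is weak$^*$-closed) and Lemma~2 (such vectors form an isometric, weak$^*$-homeomorphic copy of the unit vector basis), applied to $\{e^*_{n_k}\}\cup\{r/\|r\|\}$; with that substitution the identification $X/{}^{\perp}Z\cong W_\alpha$, $\alpha=(\|r\|,e^*(n_1),e^*(n_2),\dots)$, goes through and $\alpha$ is indeed ``bad''. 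Your $(4)\Rightarrow(1)$ is correct and is in fact a nice self-contained route: the reduction from a quotient to $X$ via $\sigma(V^{\perp},X/V)=\sigma(\ell_1,X)|_{V^{\perp}}$ is standard, and the transport of the fixed-point-free map through a norm-one Hahn--Banach extension operator $\Phi$ (which exists because a Lindenstrauss space is an ideal in every superspace) onto the weak$^*$-compact convex set $\widetilde K_0=(i^*)^{-1}(K_0)\cap R\,B_{Y^*}$ is sound; note only that this uses that $W_\beta$ is a Lindenstrauss space, whereas the result it replaces (Theorem~3.7 of \cite{Casini-Miglierina-Piasecki2015}) holds for arbitrary separable superspaces and is proved there by a direct construction.

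The genuine gap is $(1)\Rightarrow(2)$, which you correctly identify as the analytic heart but for which the mechanism you propose does not work. From a minimal invariant weak$^*$-compact convex set $K$ and a diametral approximate fixed point sequence you obtain $z_m\xrightarrow{\sigma(\ell_1,X)}0$ with $\|z_m\|\to1$, and you then assert that ``after a small perturbation and a further subsequence the $z_m$ are close to signed unit vectors $\varepsilon_m e^*_{j_m}$''. There is no reason for this: elements of a minimal invariant set are typically spread over many coordinates (already in the Alspach/Lin-type examples the approximate fixed point sequences are long convex-type combinations of basis vectors), and neither the lattice structure of $\ell_1$ nor Milman's theorem --- which locates extreme points of $B_{\ell_1}$ among weak$^*$-cluster points of $\{\pm e^*_n\}$, not arbitrary normalized weak$^*$-null sequences --- lets you collapse $z_m$ onto single coordinates. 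The known proof runs through the contrapositive: one assumes that \emph{no} subsequence of $\{e^*_n\}$ is $\sigma(\ell_1,X)$-convergent to a norm-one limit that is non-negative on its own index set, converts this into a uniform quantitative statement about $\limsup_m\|z_m+z\|$ for weak$^*$-null sequences (a Lin/Goebel--Karlovitz style estimate), and derives a contradiction with diametrality of $K$. As written, your step from the diametral sequence to the subsequence of the basis is a missing proof, not a routine verification, and it is exactly where the theorem's difficulty lives.
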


This characterization involves the presence of a particular quotient in the predual $X$ of $\ell_1$. It is worth mentioning that the ``bad'' $W_{\alpha}$ and $W_{\beta}$ in statements (3) and (4) of Theorem \ref{OGT} cannot be replaced by $c$ (see Example 2.4 in \cite{Casini-Miglierina-Piasecki2015}). 
One of the most interesting feature of this theorem is the role played by the quotients of the predual space $X$. Indeed, to our knowledge, till now quotients have not a relevant role in the study of FPP. Moreover, in Theorem 3.7 of \cite{Casini-Miglierina-Piasecki2015} we proved  that the presence of an isometric copy of a ``bad'' $W_\alpha$ in a separable Banach space $X$ implies the lack of the $\sigma(X^*,X)$-FPP. Nevertheless, in \cite{Casini-Miglierina-Piasecki2015} we left unanswered the following open problem: does the lack of the $\sigma(\ell_1,X)$-FPP imply that $X$ contains an isometric copy of a ``bad'' $W_\alpha$? In other words, can we replace ``quotient'' by  ``subspace'' in statement (3) of Theorem \ref{OGT}?

 The main aim of the present paper is to answer this question in negative  by showing that there exists a predual $X$ of $\ell_1$ such that $\ell_1$ lacks the $\sigma(\ell_1,X)$-FPP but $X$ does not contain an isometric copy of any ``bad'' $W_\alpha$ (Example \ref{Example}). To build this space we provide a concrete isometric representation for all the preduals $X$ of $\ell_1$ such that the set of $\sigma(\ell_1,X)$-limit points of the standard basis of $\ell_1$ contains a finite number of elements (Theorem \ref{l_1-preduals}, Corollary \ref{cor1} and Corollary \ref{cor2}). Finally, we mention that we also find a sufficient condition for the presence of an isometric copy of a hyperplane $W_\alpha$ in a predual of $\ell_1$ (Proposition \ref{Presence of W_e^*}).

Now, for the convenience of the reader, we recall some notations that we will use often throughout the paper. If $X$ is a Banach space, we denote by $B_X, S_X, X^*$ respectively, its closed unit ball, its unit sphere and its dual space. Finally, if $A\subset X^*$, $A'$ denotes the set of weak$^*$ limit points of $A$; moreover, if $A \subset X$, then we denote by $\mathrm{ext}\,A$ the set of its extreme points.

\section{A concrete representation of preduals of $\ell_1$ when the standard basis of $\ell_1$ has a finite number of $w^*$-limit points} \label{Sec Concrete repres}
In the sequel we will use the following notations to represent the ordinal
interval $\left[1,\omega n\right]$ (endowed with the order topology), where $\omega$ denotes the first infinite ordinal,
by means of the subset of $\mathbb{R}^{2}$ (with euclidean topology) given by 
$$
\Omega^n=\Omega_0^n 
\cup 
\left\{ \left( \frac{1}{i},0\right) \in\mathbb{R}^{2}:i=1,\dots,n\right\}
$$
where
$$
\Omega_{0}^n=\left\{ \left( \frac{1}{i},\frac{1}{j}\right)  \in\mathbb{R}^{2}:i=1,\dots,n, \, j=1,2,\dots\right\}. 
$$

It is well known that the dual of the space $C(\Omega^n)$ of all real valued continuous functions on the compact $\Omega^n$ is isometric to $\ell_{1}(\Omega^n)$.

The standard basis of $\ell_1(\Omega^n)$ is denoted by $\left\lbrace e^*_{i,j}\right\rbrace$ where $i=1,\dots,n$ and $j=0,1,\dots$ and, for every $f\in C(\Omega^n)$:
$$
e^*_{i,j}(f)=\left\lbrace \begin{array}{cc}
f\left( \frac{1}{i},\frac{1}{j}\right) & \mathrm{if }\, j\neq 0\\
f\left( \frac{1}{i},0\right) & \mathrm{if }\, j= 0
\end{array}\right. .
$$
Therefore a generic element of $\ell_1(\Omega^n)$ can be written as 
$$
y^*=\sum_{i=1}^n \left( \sum_{j=0}^{\infty} y^*(i,j)e^*_{i,j}\right) ,
$$
and, for every $f \in C(\Omega^n)$, we have
$$
y^*(f)=\sum_{i=1}^n \left( \sum_{j=1}^{\infty} f\left(\frac{1}{i},\frac{1}{j} \right) y^*(i,j)\right) +\sum_{i=1}^{n}f\left(\frac{1}{i},0 \right)y^*(i,0). 
$$
Similarly, an element $z^*\in \ell_1(\Omega_{0}^n)$ can be written as
$$
z^*=\sum_{i=1}^n \left( \sum_{j=1}^{\infty} z^*(i,j)e^*_{i,j}\right) .
$$
Now, let us fix $x_1^*,\dots,x_n^* \in B_{\ell_1(\Omega_{0}^n)}$, then we consider the subspace $W_{x_1^*,\dots,x_n^*}$ of $C(\Omega^n)$ defined by
$$
W_{x_1^*,\dots,x_n^*}=\left\lbrace f\in C(\Omega^n):f\left(\frac{1}{s},0 \right)=\sum_{i=1}^n \left( \sum_{j=1}^{\infty} f\left(\frac{1}{i},\frac{1}{j} \right)x_s^*(i,j)\right) \,\, \forall s=1,\dots,n  \right\rbrace.
$$

We can now state the main result of this section.

\begin{theorem} \label{l_1-preduals}
	The dual of the space $ W_{x_1^*,\dots,x_n^*}$ is isometric to $\ell_1$ (more precisely, to $\ell_1(\Omega_{0}^n)$) and the duality is given by the linear map $\phi: \ell_1\left( \Omega_0^n\right) \rightarrow W^*_{x_1^*,\dots,x_n^*}$ defined by
	$$
	\phi\left( z^*\right) \left( f\right)=\sum_{i=1}^n \left( \sum_{j=1}^{\infty} f\left(\frac{1}{i},\frac{1}{j} \right)z^*(i,j)\right) 
	$$
	for every $z^*\in \ell_1\left( \Omega_0^n\right)$ and every $f \in W_{x_1^*,\dots,x_n^*}$.
	
	Moreover, the $\sigma(\ell_1(\Omega_0^n),W_{x_1^*,\dots,x_n^*})$-limit points of the standard basis $\left\lbrace e^*_{i,j}\right\rbrace$ are $x_1^*,\dots,x_n^*$ and, for all $s=1,\dots,n$,
	$$
	e^*_{s,j} \xrightarrow[j\rightarrow \infty ]{\sigma(\ell_1(\Omega_0^n),W_{x_1^*,\dots,x_n^*})} x^*_s.
	$$  
\end{theorem}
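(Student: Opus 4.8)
The plan is to identify $W_{x_1^*,\dots,x_n^*}$ as an explicit $M$-ideal-type construction inside $C(\Omega^n)$ and then compute its dual directly from the annihilator. First I would consider the subspace
$$
N=\left\{ y^*\in\ell_1(\Omega^n): y^*(s,0)=-\sum_{i=1}^n\sum_{j=1}^\infty x_s^*(i,j)\,y^*(i,j)\ \text{?}\right\}
$$
—more precisely, I would compute the annihilator $W_{x_1^*,\dots,x_n^*}^\perp\subset\ell_1(\Omega^n)=C(\Omega^n)^*$. A functional $y^*\in\ell_1(\Omega^n)$ annihilates $W_{x_1^*,\dots,x_n^*}$ iff for every $f$ in the subspace we have $\sum_i\sum_{j\ge1}f(1/i,1/j)y^*(i,j)+\sum_s f(1/s,0)y^*(s,0)=0$; substituting the defining constraint $f(1/s,0)=\sum_i\sum_{j\ge1}f(1/i,1/j)x_s^*(i,j)$ and using that the values $f(1/i,1/j)$ on $\Omega_0^n$ can be prescribed almost freely (subject only to convergence and to determining the limits $f(1/i,0)$ — this needs a small argument, see below), one reads off that $W_{x_1^*,\dots,x_n^*}^\perp$ is spanned by the $n$ functionals $v_s^*=e_{s,0}^*+\sum_i\sum_{j\ge1}x_s^*(i,j)e_{i,j}^*$, $s=1,\dots,n$. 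Hence $W_{x_1^*,\dots,x_n^*}^*=\ell_1(\Omega^n)/\langle v_1^*,\dots,v_n^*\rangle$ isometrically.

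Next I would show this quotient is isometric to $\ell_1(\Omega_0^n)$ via the map induced by $\phi$. The point is that every coset of $\ell_1(\Omega^n)/\langle v_1^*,\dots,v_n^*\rangle$ has a unique representative supported on $\Omega_0^n$ (kill the $e_{s,0}^*$ coordinates by subtracting the appropriate combination of the $v_s^*$), and for such a representative $z^*$ the quotient norm is $\inf\{\|z^*+\sum_s t_s v_s^*\|_1 : t\in\mathbb R^n\}$, which I claim equals $\|z^*\|_1=\|z^*\|_{\ell_1(\Omega_0^n)}$. The inequality "$\le$" is the choice $t=0$; for "$\ge$" one uses that $\|x_s^*\|_{\ell_1(\Omega_0^n)}\le1$, so that moving mass from the $e_{s,0}^*$-coordinate costs at least as much as it saves among the $e_{i,j}^*$-coordinates — a short triangle-inequality estimate splitting the norm into the $\Omega_0^n$-part and the $n$ point masses at the $(1/s,0)$. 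This shows $\phi$ is a surjective isometry onto $W_{x_1^*,\dots,x_n^*}^*$, and checking that $\phi(z^*)$ is indeed the functional displayed, and that it is weak$^*$-continuous, is then routine since $\phi(z^*)(f)$ only involves the free coordinates of $f$.

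Finally, for the statement about limit points: $e_{s,j}^*\to x_s^*$ in $\sigma(\ell_1(\Omega_0^n),W_{x_1^*,\dots,x_n^*})$ is immediate, since for $f\in W_{x_1^*,\dots,x_n^*}$ one has $\phi^{-1}$-pairing $\langle e_{s,j}^*,f\rangle=f(1/s,1/j)\to f(1/s,0)=\sum_i\sum_{j\ge1}f(1/i,1/j)x_s^*(i,j)=\langle x_s^*,f\rangle$ by continuity of $f$ on $\Omega^n$ and the defining constraint. That these are the only limit points follows because the basis $\{e_{i,j}^*\}$, viewed in $C(\Omega^n)^*$, is just the evaluation at the isolated points $(1/i,1/j)$, whose only cluster points in $\Omega^n$ are the $n$ points $(1/s,0)$; restricting the evaluation functionals to the subspace $W_{x_1^*,\dots,x_n^*}$ and using weak$^*$-to-weak$^*$ continuity of the restriction map $C(\Omega^n)^*\to W_{x_1^*,\dots,x_n^*}^*$ sends the cluster point "evaluation at $(1/s,0)$" to $x_s^*$.

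The main obstacle I expect is the "almost freely" clause in the first paragraph: one must verify that, given any $a=(a_{i,j})\in c_0$-type data on $\Omega_0^n$ with $\lim_j a_{i,j}$ existing for each $i$ (call it $\ell_i$) and satisfying the $n$ compatibility equations $\ell_s=\sum_i\sum_{j\ge1}a_{i,j}x_s^*(i,j)$, there actually is an $f\in W_{x_1^*,\dots,x_n^*}$ with $f(1/i,1/j)=a_{i,j}$; this is where continuity at the limit ordinals and the finiteness of $n$ enter, and it is needed to be sure the annihilator is exactly $\langle v_1^*,\dots,v_n^*\rangle$ and not larger. Everything else is bookkeeping with the $\ell_1$-norm.
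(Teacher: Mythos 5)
Your proposal is correct in substance but reaches the isometry by a genuinely different route than the paper. The paper never identifies the annihilator exactly: it proves surjectivity of $\phi$ by exhibiting a representative of each coset supported on $\Omega_0^n$ (same as your ``kill the $e_{s,0}^*$ coordinates'' step), and then proves the lower bound $\|\phi(z^*)\|\ge\|z^*\|$ by constructing explicit near-norming functions $f_N\in W_{x_1^*,\dots,x_n^*}$ equal to $\mathrm{sgn}\,z^*(i,j)$ for $j\le N$ and to constants $g_i^N$ on the tails, where the $g_i^N$ solve an $n\times n$ linear system whose determinant tends to $1$; Cramer's rule and $\|x_s^*\|\le 1$ give $\limsup_N\|f_N\|\le 1$. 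You instead compute the quotient norm of $\ell_1(\Omega^n)/\mathrm{span}\{v_1^*,\dots,v_n^*\}$ directly: the estimate $\bigl\|z^*+\sum_s t_s v_s^*\bigr\|=\sum_s|t_s|+\bigl\|z^*-\sum_s t_s\sum_{i,j}x_s^*(i,j)e_{i,j}^*\bigr\|\ge\sum_s|t_s|\bigl(1-\|x_s^*\|\bigr)+\|z^*\|\ge\|z^*\|$ is shorter and avoids the linear system entirely (at the cost of not producing explicit norming functions, which the paper's construction does and which is in the spirit of what is reused in Example 3.4). Two points to tidy up. First, the ``main obstacle'' you flag --- that the annihilator might be larger than $\mathrm{span}\{v_1^*,\dots,v_n^*\}$ --- is not actually an obstacle and needs no construction of functions with prescribed values: $W_{x_1^*,\dots,x_n^*}=\bigcap_{s=1}^n\ker v_s^*$ is the common kernel of finitely many continuous linear functionals, and the purely algebraic lemma that a functional vanishing on $\bigcap_s\ker v_s^*$ lies in $\mathrm{span}\{v_1^*,\dots,v_n^*\}$ settles $W^\perp=\mathrm{span}\{v_s^*\}$ immediately; you should invoke that rather than trying to prescribe values of $f$ on $\Omega_0^n$. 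Second, the correct sign is $v_s^*=e_{s,0}^*-\sum_i\sum_{j\ge1}x_s^*(i,j)e_{i,j}^*$ (with the plus sign these functionals do not annihilate the subspace, and the span is a different space), though this does not affect the structure of your argument. Your treatment of the limit points coincides with the paper's (which merely asserts it) and is fine.
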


\begin{proof}

	First we prove that $\phi$ is a surjective map. It is well known that $W^*_{x_1^*,\dots,x_n^*}$ can be identified with  
	$$
	\nicefrac{C(\Omega^n)^*}{W_{x_1^*,\dots,x_n^*}^\perp}=\nicefrac{\ell_1(\Omega^n)}{W_{x_1^*,\dots,x_n^*}^\perp}. 
	$$
	Now, let $h^*$ be a generic element of $ W^*_{x_1^*,\dots,x_n^*}$. Then, there exists $y^*\in \ell_1(\Omega^n)$ such that $h^*=y^*+W_{x_1^*,\dots,x_n^*}^\perp$ and
	$$
	y^*=\sum_{i=1}^n \left( \sum_{j=0}^{\infty} y^*(i,j)e^*_{i,j}\right).
	$$
	By the definition of the subspace $W_{x_1^*,\dots,x_n^*}$ we have that
	$$
	e^*_{s,0}(f)=f\left( \frac{1}{s},0\right)=\sum_{i=1}^n \left(  \sum_{j=1}^{\infty} f\left(\frac{1}{i},\frac{1}{j} \right)x_s^*(i,j)\right) =\sum_{i=1}^n \left(  \sum_{j=1}^{\infty} e_{i,j}^{*}(f)x_s^*(i,j)\right)
	$$
	for every $s=1,\dots,n$ and every $f\in W_{x_1^*,\dots,x_n^*}$.
	Therefore, if we consider $z^*\in \ell_1(\Omega_0^n)$ defined by
	$$
	z^*=\sum_{i=1}^n \left( \sum_{j=1}^{\infty} y^*(i,j)e^*_{i,j}\right) +\sum_{s=1}^{n}\left(  y^*(s,0)\left( \sum_{i=1}^n \left( \sum_{j=1}^{\infty} x_s^*(i,j)e^*_{i,j}\right) \right)\right) ,
	$$
	we conclude that $\phi(z^*)=h^*$.
	
	We now prove that the map $\phi$ is an isometry. It is easy to see that
	$$
	\left\| \phi(z^*)\right\|_{W^*_{x_1^*,\dots,x_n^*}}\leq \left\|z^* \right\|_{\ell_1(\Omega_0^n)}
	$$
	for every $z^* \in \ell_1(\Omega_0^n)$.
	On the other hand, let $z^*$ be a given element in $\ell_1(\Omega_0^n)$.
	Let $\left\lbrace f_N \right\rbrace_{N=1}^\infty \subset W_{x_1^*,\dots,x^*_n}$ be a sequence of functions defined, for all $i=1,\dots,n$, by
	$$
	f_N \left( \frac{1}{i},\frac{1}{j}\right)=\left\lbrace 
	\begin{array}{cc}
		 \mathrm{sgn} \left( z^*(i,j)\right)  &  \forall j=1,\dots,N \\
		g_i^N &  \forall j\geq N+1  
	\end{array}
	\right. 
	$$
	and $f_N\left( \frac{1}{i},0\right)=g_i^N$. 
	Since $f_N \in W_{x_1^*,\dots,x_n^*}$, the values $g_1^N,\dots,g_n^N$ should satisfy the following linear system of equations:
	\begin{equation*} 
	D^N g^N=A^N
	\end{equation*}
	where 
	$$
	D^N=
	\left[ 
	\begin{array}{cccc}
	1-\sum_{j=N+1}^{\infty} x^*_1(1,j)&-\sum_{j=N+1}^{\infty} x^*_1(2,j)&\cdots&-\sum_{j=N+1}^{\infty} x^*_1(n,j)\\
	-\sum_{j=N+1}^{\infty} x^*_2(1,j)&1-\sum_{j=N+1}^{\infty} x^*_2(2,j)&\cdots&-\sum_{j=N+1}^{\infty} x^*_2(n,j)\\
	\vdots&\vdots&\ddots&\vdots\\
	-\sum_{j=N+1}^{\infty} x^*_n(1,j)&-\sum_{j=N+1}^{\infty} x^*_n(2,j)&\cdots&1-\sum_{j=N+1}^{\infty} x^*_n(n,j)
	\end{array}
	\right], 
	$$
	$$
	A^N=\left[
	\begin{array}{c}
	\sum_{i=1}^{n}\left( \sum_{j=1}^{N} x^*_1(i,j)\mathrm{sgn}\left( z^*(i,j)\right)\right)\\
	\sum_{i=1}^{n}\left( \sum_{j=1}^{N} x^*_2(i,j)\mathrm{sgn}\left( z^*(i,j)\right)\right)\\
	\vdots\\
	\sum_{i=1}^{n}\left( \sum_{j=1}^{N} x^*_n(i,j)\mathrm{sgn}\left( z^*(i,j)\right)\right)
	\end{array} 
	\right]
	\quad \text{and} \quad
	g^N=
	\left[ 
	\begin{array}{c}
	g_1^N\\
	g_2^N\\
	\vdots\\
	g_n^N 
	\end{array}
	\right]. 
	$$
	Moreover, for $s=1,\dots,n$  we denote by $D^N_s$, the matrix obtained from $D^N$ by substituting the $s$-th column by the vector $A^N$. 
	Since $x^*_1,\dots,x_n^* \in B_{\ell_{1}(\Omega_0^n)}$ and $\left|\sum_{i=1}^{n}\left( \sum_{j=1}^{N} x^*_s(i,j)\mathrm{sgn}\left( z^*(i,j)\right)\right) \right| \leq \left\|x^*_s \right\|_{\ell_1(\Omega_0^n)} $ for every $N$, it follows that
	$$
	\lim_{N\rightarrow \infty}\det D^N=1\quad 
	$$
	and, for every $s=1,\dots,n$,
	$$
	 \quad \limsup_{N\rightarrow \infty}\left| \det D^N_s\right| =\limsup_{N\rightarrow \infty}\left|\sum_{i=1}^{n}\left( \sum_{j=1}^{N} x^*_s(i,j)\mathrm{sgn}\left( z^*(i,j)\right)\right) \right|\leq \left\|x^*_s \right\|_{\ell_1}\leq 1.
	$$
	Therefore, it holds $	\limsup_{N\rightarrow \infty}\left|g^N_s \right|\leq  1$
	and it follows that
	$$
	\limsup_{N\rightarrow \infty}\left\|f_N \right\|_{C(\Omega^n)}\leq 1.
	$$
It is always possible to find a sequence of real numbers $\left\lbrace a_N\right\rbrace$ such that $\lim_{N\rightarrow \infty}a_N=1$ and $\left\|a_N f_N \right\|_{C(\Omega^n)}\leq 1$ for every $N$. We obtain that
	$$
	\left\| \phi(z^*)\right\|_{W^*_{x^*_1,\dots,x^*_n}}\geq \left|\phi(z^*)\left( a_N f_N\right)  \right|.
	$$ 
	Since $z^* \in \ell_1(\Omega_{0}^n)$, letting $N\rightarrow \infty$ in the previous inequality, we get
	$$
	\left\| \phi(z^*)\right\|_{W^*_{x_1^*,\dots,x_n^*}}\geq \left\|z^* \right\|_{\ell_1(\Omega_0^n)}.
	$$
	It follows that $W^*_{x_1^*,\dots,x_n^*}$ is isometric to $\ell_{1}(\Omega_{0}^n)$. Moreover, it is easy to see that, for every $s=1,\dots,n$,
	$$
	e^*_{s,j} \xrightarrow[j\rightarrow \infty]{\sigma(\ell_1(\Omega_0^n),W_{x_1^*,\dots,x_n^*})} x^*_s.
	$$  
	
\end{proof}

By recalling Lemma 2 in \cite{A1992}, the following result is a direct consequence of Theorem \ref{l_1-preduals}.
\begin{corollary}\label{cor1}
	Let $X$ be a Banach space such that $X^*=\ell_1(\Omega_0^n)$ and it holds
	$$
	e^*_{s,j} \xrightarrow[j\rightarrow \infty]{\sigma(\ell_1(\Omega_0^n),X)} x^*_s,
	$$
	for all $s=1,\dots,n$. Then $X=W_{x^*_1,\dots,x^*_n}$.  
\end{corollary}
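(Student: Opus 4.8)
The goal of Corollary~\ref{cor1} is to upgrade Theorem~\ref{l_1-preduals} from ``the concrete space $W_{x_1^*,\dots,x_n^*}$ realizes the given predual data'' to ``\emph{every} predual $X$ with that data is isometric to $W_{x_1^*,\dots,x_n^*}$''. The plan is to invoke the uniqueness statement referenced in the text, namely Lemma~2 in \cite{A1992}, which is exactly a rigidity result for $\ell_1$-preduals: if two preduals of $\ell_1$ induce the same weak$^*$ behaviour of the standard basis, then they are isometric via a canonical map. So the whole task reduces to checking that $X$ and $W := W_{x_1^*,\dots,x_n^*}$ feed identical data into that lemma.

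First I would record what Theorem~\ref{l_1-preduals} gives us: $W^* = \ell_1(\Omega_0^n)$ with the explicit duality $\phi$, and under this identification the standard basis $\{e_{i,j}^*\}$ satisfies $e_{s,j}^* \xrightarrow{\,j\to\infty\,} x_s^*$ in $\sigma(\ell_1(\Omega_0^n),W)$ for each $s=1,\dots,n$. By hypothesis $X$ is a Banach space with $X^* = \ell_1(\Omega_0^n)$ and the same sequential weak$^*$ limits $e_{s,j}^* \xrightarrow{\,j\to\infty\,} x_s^*$ in $\sigma(\ell_1(\Omega_0^n),X)$. Since the standard basis $\{e_{i,j}^*\}$ is norming for both preduals and these limits pin down the action of every weak$^*$ limit point of the basis, the two weak$^*$ topologies agree on the relevant generating data; this is precisely the input format of \cite[Lemma~2]{A1992}. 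Applying that lemma to the pair $(X, W)$ yields a surjective linear isometry between them that is the identity on the predual level (equivalently, whose adjoint is the identity on $\ell_1(\Omega_0^n)$), i.e.\ $X = W_{x_1^*,\dots,x_n^*}$ in the sense of the statement.

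The only point requiring a little care — and the step I expect to be the main (minor) obstacle — is verifying that the hypotheses of \cite[Lemma~2]{A1992} are met verbatim: that lemma is typically stated for the full basis $\{e_n^*\}$ of $\ell_1$ indexed by $\mathbb N$, so one must first relabel $\Omega_0^n$ by a bijection with $\mathbb N$, transport both predual structures along it, and confirm that ``having a prescribed finite set of weak$^*$ limit points attained along the obvious subsequences'' is stable under that relabelling. Once the indexing is reconciled, the limits $e_{s,j}^* \to x_s^*$ are literally the data the lemma compares, and no further computation is needed. I would therefore phrase the proof in one or two sentences: by Theorem~\ref{l_1-preduals} the space $W_{x_1^*,\dots,x_n^*}$ has the stated predual data, the same data is assumed for $X$, and \cite[Lemma~2]{A1992} forces $X = W_{x_1^*,\dots,x_n^*}$.
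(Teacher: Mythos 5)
Your proposal is correct and follows essentially the same route as the paper, which proves this corollary in a single line by combining Theorem \ref{l_1-preduals} (showing that $W_{x_1^*,\dots,x_n^*}$ realizes the prescribed weak$^*$ limit data) with the uniqueness/rigidity statement of Lemma 2 in \cite{A1992}. Your additional remarks on re-indexing $\Omega_0^n$ by $\mathbb{N}$ and on why the finitely many columns pin down the limit behaviour of every convergent subsequence of the basis are exactly the (routine) verifications left implicit in the paper.
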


\begin{corollary}\label{cor2}
	Let $x_1^*,\dots,x^*_n\in B_{\ell_1}$, then there exists a closed subspace $W$ of $C(\Omega^n)$ of codimension $n$ such that
	\begin{enumerate}
		\item $W^*=\ell_1$;
		\item the limit points of the standard basis  $\{e^*_j\}$ of $\ell_1$, with respect to the $\sigma(\ell_1,W)$-topology, are $x_1^*,\dots,x^*_n$. 
	\end{enumerate}  
\end{corollary}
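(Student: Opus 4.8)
The plan is to realize $W$ as one of the spaces $W_{x_1^*,\dots,x_n^*}\subset C(\Omega^n)$ furnished by Theorem~\ref{l_1-preduals}, after identifying $\ell_1(\Omega_0^n)$ with $\ell_1$, and then to check by hand that the codimension is exactly $n$. Concretely, first I would fix a bijection $\pi\colon\mathbb N\to\Omega_0^n$ (the set $\Omega_0^n$ is countably infinite); it induces a surjective linear isometry $J\colon\ell_1\to\ell_1(\Omega_0^n)$ sending the $j$-th standard basis vector $e_j^*$ to $e_{\pi(j)}^*$. Given $x_1^*,\dots,x_n^*\in B_{\ell_1}$, set $\tilde x_s^*:=J(x_s^*)\in B_{\ell_1(\Omega_0^n)}$ and let $W:=W_{\tilde x_1^*,\dots,\tilde x_n^*}$. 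By Theorem~\ref{l_1-preduals}, $W^*$ is isometric to $\ell_1(\Omega_0^n)$, hence to $\ell_1$, which gives (1).

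For (2), Theorem~\ref{l_1-preduals} tells us that the $\sigma(\ell_1(\Omega_0^n),W)$-limit points of the standard basis $\{e_{i,j}^*\}$ of $\ell_1(\Omega_0^n)$ are precisely $\tilde x_1^*,\dots,\tilde x_n^*$. Since $J$ is a weak$^*$-to-weak$^*$ homeomorphism that carries the standard basis $\{e_j^*\}$ of $\ell_1$ onto $\{e_{\pi(j)}^*\}$ (a reindexing of the standard basis of $\ell_1(\Omega_0^n)$, which has the same set of $w^*$-limit points) and carries each $x_s^*$ onto $\tilde x_s^*$, transporting the conclusion back through $J^{-1}$ shows that the $\sigma(\ell_1,W)$-limit points of $\{e_j^*\}$ are exactly $x_1^*,\dots,x_n^*$ (with the understanding, already implicit in the statement, that this list may contain repetitions if the $x_s^*$ are not distinct).

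It remains to verify that $W$ is a closed subspace of $C(\Omega^n)$ of codimension $n$. By the defining formula, $W=\bigcap_{s=1}^n\ker\psi_s$, where $\psi_s:=e_{s,0}^*-\sum_{i=1}^n\sum_{j=1}^\infty\tilde x_s^*(i,j)\,e_{i,j}^*\in C(\Omega^n)^*=\ell_1(\Omega^n)$; being an intersection of kernels of bounded linear functionals, $W$ is closed with $\operatorname{codim}W\le n$. The only point that is not immediate is the linear independence of $\psi_1,\dots,\psi_n$, and this is the step I would single out as the crux: in any combination $\sum_{s=1}^n c_s\psi_s$ the coefficient of $e_{t,0}^*$ equals $c_t$, because the correction terms involve only the basis vectors $e_{i,j}^*$ with $j\ge 1$; hence the combination can vanish only if $c_1=\dots=c_n=0$. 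Therefore $\operatorname{codim}W=n$, and $W$ has all the required properties. The argument is thus mostly a transcription of Theorem~\ref{l_1-preduals} together with the reindexing $\ell_1(\Omega_0^n)\cong\ell_1$ and this short independence check.
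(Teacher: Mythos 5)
Your proposal is correct and follows exactly the route the paper intends: the corollary is stated as an immediate consequence of Theorem~\ref{l_1-preduals}, obtained by identifying $\ell_1$ with $\ell_1(\Omega_0^n)$ and taking $W=W_{\tilde x_1^*,\dots,\tilde x_n^*}$. Your explicit verification of the reindexing step and of the linear independence of the $n$ defining functionals (hence codimension exactly $n$) just fills in details the paper leaves to the reader.
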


In \cite{Casini-Miglierina-Piasecki2014} we proved that, if the standard basis is $\sigma(\ell_1,X)$-convergent to a fixed element $\alpha$ of $B_{\ell_1}$, then $X$ is isometric to $W_\alpha$. On the other hand, if we consider a more general case where we only know that the $w^*$-derivative set of the standard basis of $\ell_1$ is given by a finite number of fixed elements, we have not the unique model of the predual $X$ of $\ell_1$ (see Remark \ref{not commutative}).

\section{Structure of $\ell_1$-preduals and the weak$^*$ fixed point property in $\ell_1$} \label{sec example}

We begin by providing a sufficient condition for the presence of an isometric copy of a hyperplane $W_{\alpha}$ in an $\ell_1$-predual $X$ in terms of the behaviour of the $\sigma(\ell_1,X)$-limit points of the standard basis in $\ell_1$.
This result also shows among which spaces we will not find the desired example.  

\quad
\begin{proposition}\label{Presence of W_e^*}
	Let $X$ be a Banach space such that $X^*=\ell_1$. If there exists a subsequence $\{e^*_{n_k}\}_{k \in \mathbb{N}}$ of the standard basis $\{e^*_n\}$ of $\ell_1$ and an element $e^*\in \ell_1$ such that 
	$$
	e^*_{n_k} \xrightarrow{\sigma(\ell_1,X)} e^*,
	$$
	and $\mathrm{supp}(e^*) \subseteq \{n_k\}_{k \in \mathbb{N}}$, then $X$ contains a $1$-complemented isometric copy of $W_{\alpha}$ with $\alpha=(e^*(n_1),e^*(n_2),\dots, e^*(n_k),\dots)$.
\end{proposition}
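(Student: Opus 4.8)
The plan is to realize the copy of $W_\alpha$ as the range of a norm-one projection obtained by splitting a natural restriction quotient map.

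Regard each $x\in X$ as a bounded sequence through the canonical isometric embedding $X\hookrightarrow X^{**}=\ell_\infty$, $x\mapsto(\langle e^*_n,x\rangle)_n$, and put $N=\{n_k:k\in\mathbb N\}$ and $\alpha=(e^*(n_1),e^*(n_2),\dots)$. Define $R\colon X\to\ell_\infty$ by $R(x)=(\langle e^*_{n_k},x\rangle)_{k\in\mathbb N}$. Since $e^*_{n_k}\xrightarrow{\sigma(\ell_1,X)}e^*$ and $\mathrm{supp}(e^*)\subseteq N$, for every $x$ the sequence $R(x)$ converges and $\lim_k(R(x))(k)=\langle e^*,x\rangle=\sum_k\alpha(k)(R(x))(k)$; hence $R(x)\in W_\alpha$ and $\|R(x)\|_\infty\le\|x\|$, so $R\colon X\to W_\alpha$ with $\|R\|\le 1$. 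A short computation, using that under the identification $W^*_\alpha=\ell_1$ the $k$-th basis vector of $\ell_1$ acts on $W_\alpha\subseteq c$ as evaluation of the $k$-th coordinate, gives $R^*(w^*)=\sum_k w^*(k)e^*_{n_k}$; thus $R^*$ is the isometric embedding of $\ell_1=W^*_\alpha$ onto $\ell_1(N)\subseteq\ell_1=X^*$, and consequently $R$ is a metric surjection of $X$ onto $W_\alpha$.

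Suppose we can produce a linear isometry $S\colon W_\alpha\to X$ with $R\circ S=\mathrm{id}_{W_\alpha}$. Then $P:=S\circ R$ is a projection of $X$ (indeed $P^2=S(RS)R=P$) with $\|P\|\le\|S\|\,\|R\|=1$, hence a norm-one projection; its range $S(W_\alpha)$ is isometric to $W_\alpha$, with inverse $R$ restricted to it. So $S(W_\alpha)$ is exactly a $1$-complemented isometric copy of $W_\alpha$, and the defining coefficients of the hyperplane are $\alpha=(e^*(n_1),e^*(n_2),\dots)$ by the description of $R$. The natural candidate for $S$ is $S(y)=\xi_y$, where $\xi_y\in\ell_\infty$ is the sequence with $\xi_y(n_k)=y(k)$ for all $k$ and $\xi_y(m)=\lim y$ for $m\notin N$: one checks at once that $y\mapsto\xi_y$ is linear, that $R(\xi_y)=y$, and that $\|\xi_y\|_\infty=\|y\|_\infty$ because $|\lim y|=\bigl|\sum_k\alpha(k)y(k)\bigr|\le\|\alpha\|_1\|y\|_\infty\le\|y\|_\infty$. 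Hence the entire argument reduces to the single point that $\xi_y\in X$ for every $y\in W_\alpha$.

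This membership is the crux, and it is here that the hypothesis $\mathrm{supp}(e^*)\subseteq N$ is indispensable: without it the conclusion is false (for instance, for $X=c$ with $N=\{2,3,\dots\}$ one gets $W_\alpha=c_0$, which is not the range of any norm-one projection on $c$). To prove $\xi_y\in X$ I would first reduce, by density and closedness of $X$, to the ``eventually constant'' members of $W_\alpha$ — those which agree with a given $z\in W_\alpha$ on the first $N$ coordinates and equal $c_N=\bigl(\sum_{k\le N}\alpha(k)z(k)\bigr)\big/\bigl(1-\sum_{k>N}\alpha(k)\bigr)$ afterwards — which are easily seen to be dense in $W_\alpha$; for such $y$, $\xi_y$ is a finite linear combination of the constant sequence and of finitely many unit vectors $e_{n_k}$. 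One then has to show that these particular sequences belong to $X$, equivalently that certain coordinate-type and sum-type functionals on $\ell_1$ are $\sigma(\ell_1,X)$-continuous; I expect this to follow from the weak$^*$ convergence $e^*_{n_k}\to e^*$ together with the fact that $B_{\ell_1}$ is the $w^*$-closed convex hull of $\{\pm e^*_n\}$. A cleaner route, closer to the methods of Section~\ref{Sec Concrete repres}, is to realize $X$ concretely as a closed subspace of a suitable $C(K)$ and to construct each $\xi_y$ directly as a uniform limit of explicit continuous functions, exactly as in the second half of the proof of Theorem~\ref{l_1-preduals}. In both approaches the verification that these elementary elements lie in $X$ is the main obstacle; the remaining steps are routine.
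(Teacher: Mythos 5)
The first half of your argument is sound: $R(x)=(\langle e^*_{n_k},x\rangle)_k$ is a norm-one map from $X$ into $W_\alpha$ whose adjoint is the canonical isometric $w^*$-embedding of $W_\alpha^*=\ell_1$ onto $\overline{\mathrm{span}}\{e^*_{n_k}\}$, so $R$ is a metric surjection; this only recovers the ``quotient'' information already contained in Theorem~\ref{OGT}. The entire content of the proposition is the existence of an isometric \emph{section} of $R$, and there your argument has a genuine gap: you never prove that $\xi_y\in X$, and in fact your candidate $\xi_y$ (constantly equal to $\lim y$ on the coordinates outside $N$) fails to lie in $X$ in general. Concretely, take $X=W_{x_1^*,x_2^*}\subset C(\Omega^2)$ with $x_2^*=0$, $x_1^*(1,j)=2^{-j-1}$ and $x_1^*(2,j)=0$. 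Then $e^*_{1,j}\to x_1^*$ in the $w^*$-topology and $\mathrm{supp}(x_1^*)\subseteq\{(1,j)\}_j$, so the hypotheses hold with $\alpha=(\tfrac{1}{4},\tfrac{1}{8},\tfrac{1}{16},\dots)$. The element $y=(1,\tfrac{1}{3},\tfrac{1}{3},\dots)$ belongs to $W_\alpha$ (indeed $\sum_j\alpha(j)y(j)=\tfrac{1}{4}+\tfrac{1}{4}\cdot\tfrac{1}{3}=\tfrac{1}{3}=\lim y$), but your $\xi_y$ takes the constant value $\tfrac{1}{3}$ on the second row, so any continuous extension to $\Omega^2$ has $f(\tfrac{1}{2},0)=\tfrac{1}{3}\neq 0$, violating the defining condition $f(\tfrac{1}{2},0)=0$ of $W_{x_1^*,x_2^*}$. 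The values a section must assign to the coordinates outside $N$ are governed by the $w^*$-limit behaviour of the remaining basis vectors $\{e^*_m\}_{m\notin N}$, which is invisible from the data you use; hence no explicit coordinate formula of this kind can work for an abstract predual, and the reduction to ``eventually constant'' elements does not help because the elementary sequences you would need (the constant sequence shifted onto the complement of $N$) need not belong to $X$ either.

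The paper bypasses this obstruction by importing two nontrivial external results. First, Alspach's Lemma~1 shows that $Y=\overline{\mathrm{span}}\{e^*_{n_k}\}$ is $w^*$-closed; this is exactly where the hypothesis $\mathrm{supp}(e^*)\subseteq\{n_k\}$ enters, since it guarantees that the $w^*$-closure of $\{e^*_{n_k}\}$ stays inside $Y$. Second, Gasparis's Theorem~1.1 on contractively complemented subspaces of $L_1$-preduals yields a $w^*$-continuous contractive projection $P:X^*\to Y$. Composing with the $w^*$-isometric identification of $Y$ with $W_\alpha^*$ (Alspach's Lemma~2) and passing to predual operators produces simultaneously the isometric embedding $W_\alpha\to X$ and the norm-one projection onto its range. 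The existence of such a $P$ is precisely the missing ingredient in your approach: it is the substance of Gasparis's theorem, not a routine verification, so to complete the proof along your lines you would either have to reprove that result or cite it.
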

\begin{proof}
	Let $K=\{e^*_{n_k}\}_{k \in \mathbb{N}}\subset X^*$, then $K\cap (-K)=\emptyset$. Moreover, it is easy to see that $Y=\overline{\mathrm{span}(K)}^{\left\|\cdot \right\|_{\ell_1} }$ is such that $\overline{K}^{w^*}\subset Y$. Hence, Lemma 1 in \cite{A1992} implies that $Y$ is $w^*$-closed. 
	
	We are now in a position to apply Theorem 1.1 in \cite{Gasparis 2002} to obtain a $w^*$-continuous and contractive projection $P:X^*\rightarrow Y$.
	
	Moreover, let us consider the map $T$
	defined by
	$$
	T\left( \sum_{k=1}^{\infty}a_ke^*_{n_k}\right) =\sum_{k=1}^{\infty}a_kf^*_k,
	$$
	where $\{f^*_k\}$ is the standard basis of $W^*_{\alpha}=\ell_1$ and $\left\lbrace a_k \right\rbrace\subset \mathbb{R}$ such that $\sum|a_k|<\infty$. By Lemma 2 in \cite{A1992}, $T$ is a $w^*$-continuous isometry from $Y$ onto $ W^*_{\alpha}$.
	Let us denote by $^{\perp}Y$ the annihilator of $Y\subseteq X^{*}$ in $X$, since $P$ and $T$ are $w^*$-continuous, there exist two linear bounded operators
	\[
	S:\nicefrac{X}{^{\perp}Y}\longrightarrow X\quad\mathrm{and}\quad
	R:W_{\alpha}\longrightarrow \nicefrac{X}{^{\perp}Y}
	\]
	such that $S^{*}=P$, $R^{*}=T$. Observe that $S$ is an isometry into and $R$ is an isometry onto. Therefore, the linear
	map $S\circ R:W_{\alpha}\longrightarrow X$ is an isometry into and $(S\circ R)(W_{\alpha})\subseteq X$ is an isometric copy of $W_{\alpha}$.
\end{proof}

\begin{remark}
	Proposition \ref{Presence of W_e^*} implies that if the standard basis of $X^*=\ell_1$ has a $w^*$-limit point $e^*$ with a finite support, then $X$ contains an isometric copy of $W_{\alpha}$ with $\alpha = e^*$.  
\end{remark}

We shall need one more result.

\begin{lemma}\label{lemmaDS} (\cite{Dunford Schwartz}, p. 441)
	Let $X$ be a closed subspace of the space $C(K)$ of all real continuous functions on a compact Hausdorff space $K$. For each $q \in K$ let $x_q^*\in X^*$ be defined by
	$$
	x^*_q(f)=f(q),\quad f\in X.
	$$
	Then every extreme point of the closed unit ball of $X^*$ is of the form $\pm x^*_q$ with $q \in K$.
\end{lemma}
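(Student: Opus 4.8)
The final statement to prove is Lemma~\ref{lemmaDS}, which is the classical description of extreme points of the dual ball of a subspace of $C(K)$: every extreme point of $B_{X^*}$ has the form $\pm x_q^*$ for some $q \in K$. Although the paper attributes it to Dunford--Schwartz, a self-contained proof is short and I would organize it as follows.

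\medskip

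\textbf{Plan.} The plan is to realize $X^*$ as a quotient of $C(K)^* = M(K)$, the space of regular Borel measures on $K$, via the restriction map, and then to pull back an extreme point of $B_{X^*}$ to an extreme point of the corresponding face of $B_{M(K)}$, which by the Riesz representation theorem consists of signed point masses. Concretely, let $\Phi^* : M(K) \to X^*$ be the adjoint of the inclusion $\Phi : X \hookrightarrow C(K)$; by Hahn--Banach $\Phi^*$ maps $B_{M(K)}$ onto $B_{X^*}$. First I would fix an extreme point $\psi \in \mathrm{ext}\,B_{X^*}$ and consider the set $F = \{\mu \in B_{M(K)} : \Phi^*\mu = \psi\}$. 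This set is nonempty, convex, and $w^*$-compact (it is the intersection of the $w^*$-compact ball $B_{M(K)}$ with the $w^*$-closed affine subspace $(\Phi^*)^{-1}(\psi)$), so by Krein--Milman it has an extreme point $\mu_0$. The key observation is that $\mu_0$ is in fact an extreme point of all of $B_{M(K)}$: if $\mu_0 = \tfrac12(\mu_1 + \mu_2)$ with $\mu_1,\mu_2 \in B_{M(K)}$, then applying $\Phi^*$ and using that $\psi$ is extreme in $B_{X^*}$ together with $\|\Phi^*\mu_i\| \le \|\mu_i\| \le 1$ forces $\Phi^*\mu_1 = \Phi^*\mu_2 = \psi$, hence $\mu_1,\mu_2 \in F$, and extremality of $\mu_0$ in $F$ gives $\mu_1 = \mu_2$.

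\medskip

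\textbf{Identifying the extreme points of $B_{M(K)}$.} Next I would invoke the standard fact that $\mathrm{ext}\,B_{M(K)} = \{\pm\delta_q : q \in K\}$, where $\delta_q$ is the unit point mass at $q$. One direction (each $\pm\delta_q$ is extreme) is immediate; the other follows because a measure $\mu$ with $\|\mu\| = 1$ whose support contains two distinct points can be split, using a Urysohn function separating small neighbourhoods of those points, into a nontrivial convex combination of two norm-one measures. Then $\mu_0 = \pm\delta_q$ for some $q \in K$, and therefore $\psi = \Phi^*\mu_0 = \pm\,\Phi^*\delta_q = \pm x_q^*$, since by definition $(\Phi^*\delta_q)(f) = \delta_q(\Phi f) = f(q) = x_q^*(f)$ for all $f \in X$. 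This completes the proof.

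\medskip

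\textbf{Main obstacle.} There is no serious obstacle; the only point requiring care is the lifting step, i.e.\ verifying that an extreme point of the fiber $F = (\Phi^*)^{-1}(\psi) \cap B_{M(K)}$ is genuinely extreme in $B_{M(K)}$, which rests on the norm non-increasing property of $\Phi^*$ and the extremality of $\psi$. Everything else is a direct appeal to Krein--Milman, the Riesz representation theorem, and the explicit computation $\Phi^*\delta_q = x_q^*$. In a paper aimed at specialists one could even simply cite \cite{Dunford Schwartz} as the authors do; the sketch above indicates why the result holds.
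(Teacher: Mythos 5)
Your argument is correct: the lifting of an extreme point of $B_{X^*}$ to an extreme point of the fiber $F=(\Phi^*)^{-1}(\psi)\cap B_{M(K)}$, the verification that such a point is extreme in all of $B_{M(K)}$, and the Arens--Kelley identification $\mathrm{ext}\,B_{M(K)}=\{\pm\delta_q: q\in K\}$ together give exactly the stated conclusion. The paper offers no proof of its own, only the citation to Dunford--Schwartz, and the proof found there is essentially the one you sketch, so there is nothing to add beyond noting that your argument is the standard one.
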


\quad

We can now provide the promised example that allows us to answer the open question posed in \cite{Casini-Miglierina-Piasecki2015}.

\begin{example} \label{Example}Let $x_1^*$, $x_2^* \in \ell_1(\Omega_0^2)$ be defined by
\begin{equation*}
	x_1^*(i,j) =
	\begin{cases}
	\frac{1}{2^{2j-1}} & \quad\text{for } i=1 \text{ and } j\ge 1\\
	\frac{1}{2^{2j}} & \quad\text{for } i=2 \text{ and } j\ge 1,
	\end{cases}
\end{equation*}
and $x_2^*(i,j)=0$ for $i=1,2$ and for every $j\geq 1$. Then
$$
W_{x_1^*,x_2^*}=\left\lbrace f\in C(\Omega^2):f(1,0)= \sum_{j=1}^{\infty} \frac{f\left( 1,\frac{1}{j}\right) }{2^{2j-1}} + \sum_{j=1}^{\infty} \frac{f\left( \frac{1}{2},\frac{1}{j}\right) }{2^{2j}} \textrm{ and }  f\left( \frac12,0\right) =0 \right\rbrace.
$$
By Theorem \ref{l_1-preduals}, $W_{x_1^*,x_2^*}^*=\ell_1(\Omega_0^2)$ and
\begin{equation*}
	e^*_{1,j} \xrightarrow {\sigma(\ell_1(\Omega_0^2),W_{x_1^*,x_2^*})} x_1^*=\left(\frac{1}{2},\frac{1}{8},\frac{1}{32},\dots,\frac{1}{4},\frac{1}{16},\frac{1}{64},\dots \right) ,
\end{equation*}
and
\begin{equation*}
	e^*_{2,j} \xrightarrow {\sigma(\ell_1(\Omega_0^2),W_{x_1^*,x_2^*})} x_2^*=\left(0,0,0,\dots,0,0,0,\dots \right).
\end{equation*}
By Theorem \ref{OGT}, $\ell_1(\Omega_0^2)$ lacks the $\sigma(\ell_1(\Omega_0^2),W_{x_1^*,x_2^*})$-FPP. We shall prove that for every $\alpha\in \ell_1$ with $\left\| \alpha\right\| =1$, $W_{\alpha} \not\subset W_{x_1^*,x_2^*}$. This in particular shows that $W_{x_1^*,x_2^*}$ does not have a subspace linearly isometric to a ``bad'' $W_{\alpha}$.

Suppose that there exists $\alpha\in \ell_1$ with $\left\| \alpha\right\| =1$ such that $W_{\alpha} \subset W_{x_1^*,x_2^*}$. Since $W_{\alpha} \subset C(\Omega^2)$, by Lemma \ref{lemmaDS}, Theorem \ref{l_1-preduals} and the fact that $e_n^* \xrightarrow {\sigma(\ell_1,W_{\alpha})} \alpha$, we conclude that there exists a sequence $\left\lbrace \left( 1, \frac{1}{j_n}\right)\right\rbrace _{n \in \mathbb{N}} \subset \Omega_0^2 $ and a sequence of signs $\left\lbrace \epsilon_n\right\rbrace  _{n \in \mathbb{N}}$ such that $\epsilon_n e_{1,j_n}^*$ is an extension of $e_n^*$ to the whole space $W_{x_1^*,x_2^*}$ for all $n \in \mathbb{N}$, except for, at most, a finite number of $n$'s. Without loss of generality for our further considerations, we may assume that it holds for all $n \in \mathbb{N}$. Observe also that the entire sequence $\left\lbrace \epsilon_n e^*_{1,j_n}\right\rbrace_{n\in \mathbb{N}} $ must be $\sigma(\ell_1(\Omega_0^2),W_{x_1^*,x_2^*})$-convergent to $x_1^*$ or $-x_1^*$ (otherwise we would get a contradiction with the $\sigma(\ell_1,W_{\alpha})$-convergence of $\left\lbrace e_n^*\right\rbrace $ to $\alpha$). We consider the first case. The second is similar. So, suppose that $$\epsilon_n e^*_{1,j_n} \xrightarrow {\sigma(\ell_1(\Omega_0^2),W_{x_1^*,x_2^*})} x_1^*.$$ Then, $\epsilon(n)=1$ for all $n \in \mathbb{N}$, except for a finite number of $n$'s. W.L.O.G. we may assume that $\epsilon(n)=1$ for all $n \in \mathbb{N}$. Clearly, $x_1^*$ is an extension of $\alpha$ to the whole space $W_{x_1^*,x_2^*}$.

Let $$A:=\left\lbrace n \in \mathbb{N}: \alpha(n)<0 \right\rbrace .$$ Now we have to consider two cases.

\textsc{Case 1.} The set $A$ is finite. Then there exists $f \in S_{W_{\alpha}}$ such that $\alpha(f)=1$. However, there is no $f \in W_{x_1^*,x_2^*}$ such that $x_1^*(f)=1$ because the point $(1,1,1,\dots,1,1,1,\dots) \notin W_{x_1^*,x_2^*}$. A contradiction.

\textsc{Case 2.} The set $A$ is infinite. Then, for every $n \in A$ we have 
$$\left\|e_n^*-\alpha \right\| = 2.$$ 
On the other hand, for all $n \in \mathbb{N}$
$$\left\| e^*_{1,j_n}-x_1^*\right\| <2.$$
Since $e^*_{1,j_n}-x_1^*$ is an extension of $e_n^*-\alpha$ for all $n \in A$ except for, at most, a finite number of $n$'s that we removed before, we get a contradiction.

Therefore, there is no $\alpha\in \ell_1$ with $\left\| \alpha\right\| =1$ such that $W_{\alpha} \subset W_{x_1^*,x_2^*}$.

Actually, the above considerations provide more information. Namely, by following the reasoning presented for $W_{\alpha} \subset W_{x_1^*,x_2^*}$ and based on Lemma \ref{lemmaDS} and Theorem \ref{l_1-preduals}, we conclude that if $X \subset W_{x_1^*,x_2^*}$ is an $\ell_1$-predual, then $(\mathrm{ext }\,B_{X^*})'$ contains at most three elements (otherwise we would get a contradiction with the fact that $(\mathrm{ext }\,B_{W_{x_1^*,x_2^*}^*})'$ contains exactly three elements). Clearly, if $(\mathrm{ext }\,B_{X^*})'$ has an odd number of elements, then it must contain the origin. In particular, if $(\mathrm{ext }\,B_{X^*})'$ is a singleton, then $X$ is isometric to $c_0$. Finally, if $(\mathrm{ext }\,B_{X^*})'$ has exactly two elements, then $X$ must be isometric to some hyperplane $W_{\alpha}$ and, by what we have already proved, $(\mathrm{ext }\,B_{W_{\alpha}^*})'=\left\lbrace \pm \alpha\right\rbrace $ with $0<\left\| \alpha \right\| <1$.
 
Reassuming, the space $W_{x_1^*,x_2^*}$ does not contain an isometric copy of any $\ell_1$-predual $X$ such that all the elements of $(\mathrm{ext }\,B_{X^*})'$ have norm $1$. 

\end{example}

\begin{remark}\label{not commutative}
Let $W_{x_1^*,x^*_2}$ be as in Example \ref{Example}. Then 
$$W_{x_2^*,x_1^*}=\left\lbrace f\in C(\Omega^2):f(1,0) =0 \textrm{ and } f\left( \frac12,0\right) = \sum_{j=1}^{\infty} \frac{f\left( 1,\frac{1}{j}\right) }{2^{2j-1}} + \sum_{j=1}^{\infty} \frac{f\left( \frac{1}{2},\frac{1}{j}\right) }{2^{2j}} \right\rbrace.
$$
By Theorem \ref{l_1-preduals}, $W_{x_2^*,x_1^*}^*=\ell_1(\Omega_0^2)$ and
\begin{equation*}
	e^*_{1,j} \xrightarrow {\sigma(\ell_1(\Omega_0^2),W_{x_2^*,x_1^*})} x_2^*=\left(0,0,0,\dots,0,0,0,\dots \right),
\end{equation*}
and
\begin{equation*}
	e^*_{2,j} \xrightarrow {\sigma(\ell_1(\Omega_0^2),W_{x_2^*,x_1^*})} x_1^*=\left(\frac{1}{2},\frac{1}{8},\frac{1}{32},\dots,\frac{1}{4},\frac{1}{16},\frac{1}{64},\dots \right) .
\end{equation*}
Hence, $(\mathrm{ext }\,B_{W_{x_1^*,x_2^*}^*})'=(\mathrm{ext }\,B_{W_{x_2^*,x_1^*}^*})'$. However, $W_{x_1^*,x_2^*}$ is not isometric to $W_{x_2^*,x_1^*}$. Suppose, contrary to our claim, that there exists an isometrical isomorphism $T$ from $W_{x_1^*,x_2^*}$ onto $W_{x_2^*,x_1^*}$. Then its adjoint $T^*$ is a weak$^*$ continuous isometry from $\ell_1(\Omega_0^2)=W_{x_2^*,x_1^*}^*$ onto $\ell_1(\Omega_0^2)=W_{x_1^*,x_2^*}^*$. Therefore, there is a permutation $\pi: \left\lbrace (i,j):i\in \left\lbrace 1,2\right\rbrace  \textrm{, } j \in \mathbb{N} \right\rbrace  \to \left\lbrace (i,j):i\in \left\lbrace 1,2\right\rbrace  \textrm{, } j \in \mathbb{N} \right\rbrace$ and a function $\epsilon:\left\lbrace (i,j):i\in \left\lbrace 1,2\right\rbrace  \textrm{, } j \in \mathbb{N} \right\rbrace \to \left\lbrace -1,1\right\rbrace $ such that
$$T^*(e^*_{i,j})=\epsilon(i,j)e^*_{\pi(i,j)}$$
for every $i=1,2$ and every $j\in \mathbb{N}$. Moreover, since the sequence $\left\lbrace e^*_{2,j}\right\rbrace_{j \in \mathbb{N}} $ is $\sigma(\ell_1(\Omega_0^2),W_{x_2^*,x_1^*})$-convergent to $x_1^*$, the sequence $\left\lbrace T^*(e^*_{2,j})\right\rbrace _{j\in\mathbb{N}}$ is $\sigma(\ell_1(\Omega_0^2),W_{x_1^*,x_2^*})$-convergent to $T^*(x_1^*)= \pm x_1^*$. W.L.O.G. we may assume that $T^*(x_1^*)= x_1^*$. Then there is a sequence $\left\lbrace \left( 1,\frac{1}{n_j}\right)  \right\rbrace _{j \in \mathbb{N}} \subset \Omega_0^2$ such that $T^*(e^*_{2,j})=e^*_{1,n_j}$ for all $j \in \mathbb{N}$ except for, at most, a finite number of $j$'s. We have $$\left\|T^*( e^*_{2,j}- x_1^*)  \right\| =\left\|e^*_{1,n_j}- x_1^* \right\| = 2 - \frac{1}{2^{2n_{j}-1}}$$
whereas
$$\left\|e^*_{2,j}- x_1^* \right\| = 2 - \frac{1}{2^{2j}}.$$
This contradicts our assumption that $T^*$ is an isometry.
\end{remark}

  \section{Final Remarks}
 To conclude this paper it is interesting to give a look to the landscape surrounding the results presented here. Indeed, some remarks will clarify the main motivations of the present paper.
 
 As described in the introduction, the problem of finding necessary and sufficient conditions for FPP is usually very hard. In this vein, a milestone in the study of FPP is the result asserting that a closed subspace of $L_1([0,1])$ enjoys the FPP if and only if it is reflexive (\cite{Maurey1980-1981}, \cite{Dowling-Lennard1997}). Moreover, closely related to this topic is the old and well-known open problem to prove (or disprove) if any reflexive Banach space satisfies the FPP (see, e.g., \cite{Dominguez-Japon} and the reference therein).
 
 We focused our study on the $w^*$-FPP in the duals of separable Lindenstrauss spaces. Besides the characterization of the $w^*$-FPP that we mentioned in Theorem \ref{OGT}, we obtained also another characterization by considering spaces $A(S)$ of affine continuous functions on a Choquet simplex $S$. Namely, in  \cite{Casini-Miglierina-PiaseckiPAMS} we proved the following result.
 \begin{thm}\label{Characterization A(S)}	
 	Let $X$ be a predual of $\ell_1$. The following statements are equivalent:
 	\begin{enumerate}
 		\item \label{item noFPP1} $\ell_1$ lacks the $\sigma(\ell_1,X)$-FPP;
 		
 		\item \label{item affine}there is a quotient of $X$ isometric to some infinite-dimensional $A(S)$ space;
 		
 		\item \label{item containement}there is a quotient of $X$ containing an isometric copy of some infinite-dimensional $A(S)$ space.
 		\end{enumerate}
 \end{thm}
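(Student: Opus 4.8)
The plan is to establish the cycle $(1)\Rightarrow(2)\Rightarrow(3)\Rightarrow(1)$, with Theorem \ref{OGT} carrying most of the load; the step $(2)\Rightarrow(3)$ is trivial. The other two steps pivot on the remark that $c$ is isometric to a ``bad'' hyperplane, namely $W_{e_1}=\{x\in c:\lim_i x(i)=x(1)\}$ — here $\|e_1\|_{\ell_1}=1$ and $e_1$ has infinitely many non-negative coordinates, so $W_{e_1}$ is ``bad'', and $x\mapsto(x(2),x(3),\dots)$ is an isometry of $W_{e_1}$ onto $c$. The two substantive ingredients I would need are then: \emph{(I)} every infinite-dimensional $A(S)$-space contains an isometric copy of $c$; and \emph{(II)} for every $\alpha\in\ell_1$ with $\|\alpha\|=1$, the hyperplane $W_\alpha$ has a quotient isometric to an infinite-dimensional $A(S)$-space.

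For $(3)\Rightarrow(1)$, if a quotient $Q$ of $X$ contains an isometric copy of an infinite-dimensional $A(S)$-space, then by \emph{(I)} that copy contains an isometric copy of $c$, hence of the ``bad'' $W_{e_1}$; so a quotient of $X$ contains an isometric copy of a ``bad'' hyperplane, and the implication $(4)\Rightarrow(1)$ of Theorem \ref{OGT} gives that $\ell_1$ lacks the $\sigma(\ell_1,X)$-FPP. To prove \emph{(I)}, since $A(S)$ is infinite-dimensional, $\overline{\mathrm{ext}\,S}$ is an infinite metrizable compactum and therefore contains a nontrivial convergent sequence, which after a small perturbation one may take to be a sequence of extreme points $s_k\to s_0$; selecting an affine ``partition of unity'' peaked at the $s_k$ — functions $h_k\in A(S)$ with $0\le h_k\le1$, $h_k(s_j)=\delta_{kj}$, $h_k(s_0)=0$ and $\sum_k h_k\le \mathbf{1}$ — one verifies that $\iota(y)=\big(\lim_i y(i)\big)\mathbf{1}+\sum_k\big(y(k)-\lim_i y(i)\big)h_k$ is a linear isometry of $c$ into $A(S)$. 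This is the natural refinement of Zippin's embedding of $c_0$ from \cite{Zippin1969}; the sharpening from $c_0$ to $c$ is what matters here, since $c_0$ is a ``good'' predual.

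For $(1)\Rightarrow(2)$, the implication $(1)\Rightarrow(3)$ of Theorem \ref{OGT} gives a quotient of $X$ isometric to a ``bad'' $W_\alpha$, and since a quotient of a quotient is a quotient it suffices to prove \emph{(II)}. When $\alpha$ has only finitely many negative coordinates (in particular when $\alpha\ge0$), or only finitely many positive ones, $W_\alpha$ is already an $A(S)$-space: one picks an eventually constant sign pattern $\varepsilon\in\{-1,1\}^{\mathbb N}$ with $\sum_i\alpha(i)\varepsilon(i)=\lim_i\varepsilon(i)$, so that $\varepsilon\in W_\alpha$ is an extreme point of $B_{W_\alpha}$ and Semadeni's theorem \cite{Semadeni1964} applies. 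In the remaining case one must pass to a proper quotient: one seeks a $\|\cdot\|_1$-closed, $w^*$-closed subspace $Y$ of $\ell_1=W_\alpha^*$ (for example the $\|\cdot\|_1$-closed span of $\{e^*_k:\alpha(k)\ge0\}$ together with $\alpha$ is $w^*$-closed by Lemma~1 of \cite{A1992}, its $w^*$-closure being already contained in it) for which the $\ell_1$-predual $W_\alpha/{}^{\perp}Y$ carries an extreme point of its unit ball; Semadeni's theorem then identifies $W_\alpha/{}^{\perp}Y$ with an infinite-dimensional $A(S)$-space.

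The crux — and the reason the theorem is not a formal corollary of Theorem \ref{OGT} — is the last step of $(1)\Rightarrow(2)$. Theorem \ref{OGT} delivers only a ``bad'' $W_\alpha$ as a quotient, but a ``bad'' $W_\alpha$ need not be an $A(S)$-space: $B_{W_\alpha}$ may have no extreme point whatsoever, as happens for $\alpha(i)=(-1)^{i+1}2^{-i}$, where every norm-one element of $W_\alpha$ has a coordinate along which it can be perturbed within the ball. So one genuinely has to build the $A(S)$-quotient by hand, and the subtle point is that possessing an extreme point of the unit ball is strictly stronger than merely keeping the origin out of the $w^*$-closure of $\mathrm{ext}\,B_{(\cdot)^*}$ — that weaker property already holds for the example above, which is nonetheless not $A(S)$ — so the correct choice of $Y$ must be shown to give the quotient an order unit, equivalently a sign-type element that attains the norm. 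This verification is where the real work lies; ingredient \emph{(I)} is the other point demanding some care, though it is essentially classical.
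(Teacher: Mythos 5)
This theorem is not actually proved in the present paper: it is quoted verbatim from \cite{Casini-Miglierina-PiaseckiPAMS}, so there is no in-paper argument to compare yours against. Judged on its own terms, your proposal has a fatal flaw: ingredient \emph{(I)} (``every infinite-dimensional $A(S)$-space contains an isometric copy of $c$'') is false, and it is the whole of your $(3)\Rightarrow(1)$. Counterexample: let $\alpha=(2^{-1},2^{-2},2^{-3},\dots)$, so $\|\alpha\|_1=1$ and $\mathbf{1}=(1,1,1,\dots)\in W_\alpha$ is an extreme point of $B_{W_\alpha}$; by Semadeni's theorem $W_\alpha$ is an infinite-dimensional $A(S)$-space. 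Suppose $T:c\to W_\alpha$ were an isometric embedding. Then $T^*$ maps $B_{W_\alpha^*}=B_{\ell_1}$ onto $B_{c^*}=B_{\ell_1}$, so each extreme point $e_n^*$ of $B_{c^*}$ is the image of an extreme point $\epsilon_n e^*_{m_n}$ of $B_{W_\alpha^*}$ (the fibre over an extreme point is a nonempty $w^*$-compact face). Passing to a subsequence with $m_{n_j}$ strictly increasing and $\epsilon_{n_j}=\epsilon$ constant, $w^*$-continuity of $T^*$ and $e^*_{m}\xrightarrow{\sigma(\ell_1,W_\alpha)}\alpha$ force $T^*(\epsilon\alpha)=e_1^*$, whence $2=\|e^*_{n_j}-e_1^*\|=\|T^*(\epsilon e^*_{m_{n_j}}-\epsilon\alpha)\|\le\|e^*_{m_{n_j}}-\alpha\|=2-2\alpha(m_{n_j})<2$, a contradiction. (This is precisely the distance computation the paper runs in Case 2 of Example \ref{Example} and in Remark \ref{not commutative}; it refutes your lemma.) So ``$A(S)\supseteq c\cong W_{e_1}$, then apply $(4)\Rightarrow(1)$ of Theorem \ref{OGT}'' collapses at the first step. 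The mechanism the authors actually lean on --- stated explicitly in their Final Remarks --- is that every element of $(\mathrm{ext}\,B_{A(S)^*})'$ has norm $1$ (because all evaluation functionals take the value $1$ at $\mathbf{1}$), which is what one must feed, after the quotient/subspace bookkeeping, into condition (2) of Theorem \ref{OGT}; note that in the counterexample $\pm\alpha$ do have norm one even though $c$ does not embed.

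The direction $(1)\Rightarrow(2)$ is also not established. Your reduction to ingredient \emph{(II)} is reasonable, and your observations are correct that a ``bad'' $W_\alpha$ with $\alpha(i)=(-1)^{i+1}2^{-i}$ has no extreme points in its unit ball (so it is not itself an $A(S)$-space, and the theorem is genuinely not a formal corollary of Theorem \ref{OGT}), and that $W_\alpha$ is an $A(S)$-space when the sign pattern of $\alpha$ is eventually constant. But in the remaining case you only gesture at a candidate $w^*$-closed subspace $Y\subseteq\ell_1$ and explicitly defer the decisive verification that $B_{W_\alpha/^{\perp}Y}$ has an extreme point; without that, no $A(S)$-quotient has been produced. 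As written, then, one implication rests on a false lemma and another is a plan rather than a proof; only $(2)\Rightarrow(3)$ stands.
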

Also here, it is worth mentioning that the space $A(S)$ in statements (2) and (3) of Theorem \ref{Characterization A(S)} cannot be replaced by any $C(K)$ space (see Remark 3.5 in \cite{Casini-Miglierina-PiaseckiPAMS}).

As for the characterization of the $\sigma(\ell_1,X)$-FPP given by a ``bad'' $W_\alpha$, the novelty of this result lies in the presence of particular quotients in the predual space $X$. Since sufficient conditions for the lack of the $\sigma(X^*,X)$-FPP, where $X$ is a separable Banach space, were obtained by requiring the presence in the predual $X$ a subspace isometric to a ``bad'' $W_\alpha$ (Theorem 3.7 in \cite{Casini-Miglierina-Piasecki2015}) or an infinite-dimensional space $A(S)$ (Theorem 2.3 in \cite{Casini-Miglierina-PiaseckiPAMS}), it is natural to ask if, in the setting of $\ell_1$-preduals, also necessary conditions can be obtained by supposing that $X$ contains an isometric copy of a ``bad'' $W_\alpha$ or an infinite-dimensional space $A(S)$. In \cite{Casini-Miglierina-PiaseckiPAMS}, we succeeded to show that considering the quotients in Theorem \ref{Characterization A(S)} is unavoidable when the characterization of lack of the $\sigma(\ell_1,X)$-FPP is given in terms of a space $A(S)$. Indeed, Example 3.3 in \cite{Casini-Miglierina-PiaseckiPAMS} exhibits a predual $X$ of $\ell_1$ such that $\ell_1$ lacks the $\sigma(\ell_1,X)$-FPP, but $X$ does not contain an isometric copy of any infinite-dimensional space $A(S)$. Unfortunately, this space $X$ is itself a ``bad'' $W_\alpha$. Therefore, it remained an open problem if quotients are strictly necessary in the characterization presented in Theorem \ref{OGT}, obtained by means of a ``bad'' $W_\alpha$. Example \ref{Example} in this paper answers this question by closing the whole discussion. Indeed, the space $\ell_1$ lacks the $\sigma(\ell_1,W_{x_1^*,x_2^*})$-FPP, but $W_{x_1^*,x_2^*}$ does not contain an isometric copy of any  ``bad'' $W_\alpha$. We also recall that for any infinite-dimensional space $A(S)$, every element of $(\mathrm{ext }\,B_{A(S)^*})'$ has norm $1$ (see, e.g., the proof of Theorem 2.3 in \cite{Casini-Miglierina-PiaseckiPAMS}). Therefore, by the summary comment in Example \ref{Example}, the space $W_{x_1^*,x_2^*}$ does not contain an isometric copy of any infinite-dimensional space $A(S)$.

\end{document}